\def\BibTeX{{\rm B\kern-.05em{\sc i\kern-.025em b}\kern-.08em
    T\kern-.1667em\lower.7ex\hbox{E}\kern-.125emX}}
\newtheorem{Def}{Definition}[section]
\newtheorem{Prop}[Def]{Proposition}
\begin{document}

\title{Weighted tensorized fractional Brownian textures}
%{\footnotesize \textsuperscript{*}Note: Sub-titles are not captured in Xplore and
%should not be used}
%\thanks{Identify applicable funding agency here. If none, delete this.}
%}

\auteurs{
  % Syntaxe : \auteur{<prénom>}{<nom>}{<adresse électronique>}{<indice d'affiliation>}
  \auteur{C\'eline}{Esser}{celine.esser@uliege.be}{1}
  \auteur{Claire}{Launay}{claire.launay@univ.ubs.fr}{2}
  \auteur{Laurent}{Loosveldt}{l.loosveldt@uliege.be}{1}
  \auteur{B\'eatrice}{Vedel}{beatrice.vedel@univ-ubs.fr}{2}
}

% Affiliations
\affils{
  % Syntaxe : \affil{<indice d'affiliation>}{<nom du labo et adresse>}
  \affil{1}{D\'epartement de Math\'ematique,
        Université de Liège, Belgique
  }
  \affil{2}{UMR CNRS 6205, LMBA,
        Université de Bretagne Sud,
        Vannes, France
  }
}

\resume{
Nous pr\'esentons un nouveau mod\`ele de textures obtenues comme r\'ealisations de champs appel\'es champs browniens fractionnaires pond\'er\'ement tensoris\'es. Ils sont obtenus par une relaxation de la structure de produit tensoriel qui appara\^it dans la d\'efinition des draps browniens fractionnaires. 
Des propriétés statistiques de ces champs, telles que l'auto-similarit\'e, la stationnarit\'e des accroissements rectangulaires, sont obtenues. 
Une g\'en\'eralisation \`a  des champs \`a auto-similarit\'e matricielle est propos\'ee et des simulations bas\'ees sur la repr\'esentation spectrale sont effectu\'ees. 
}
\abstract{
We present a new model of textures, obtained as realizations of a new class of fractional Brownian fields. These fields, called {\it weighted tensorized fractional Brownian fields}, are obtained by a relaxation of the tensor-product structure that appears in the definition of fractional Brownian sheets. Statistical properties such as self-similarity and stationarity of rectangular increments are obtained. An operator scaling extension is defined and we provide simulations of the fields using their spectral representation.
}

\maketitle

\section{Introduction}
 The modeling of natural phenomena, in particular textures, by random objects has led to the introduction of numerous stochastic processes and fields. 
The most famous and historically first example is the well-known Brownian motion, which was extended to fractional Brownian motions by Kolmogorov in his seminal  paper of 1940 \cite{MR0003441}, to define ``Gaussian spirals'' in Hilbert spaces. 
Given a Hurst parameter $H \in (0,1)$, the fractional Brownian motion $B^H$ is the unique Gaussian process with stationary increments satisfying the self-similarity relation $B^H_{at} \stackrel{(d)}{=} a^H B_t$ for any $a, t>0$, where $\stackrel{(d)}{=}$ means that the equality holds in the sense of finite-dimensional distributions. It can be characterized via its harmonizable representation:
\begin{equation}
\forall t \in \mathbb{R}_+, \, B_t^H= \int_{\mathbb{R}} \frac{e^{it\xi}-1}{\vert \xi \vert^{H+1/2}} d\hat{\bf{W}}(\xi), 
\end{equation}

Several extensions have been proposed in higher dimensions. In particular, two natural generalizations have been introduced. The first one is the Levy fractional Brownian motion (LFBM) of Hurst index $H \in (0,1)$, also called fractional Brownian field (see e.g. \cite{ST94}).  It is the unique real-valued isotropic Gaussian field  $Y^H$ with stationary increments satisfying the self-similarity property $Y^H_{a\bf{x}}\stackrel{(d)}{=} a^H Y^H_{\bf{x}}$, where ``isotropic'' means that the field is invariant in law by rotation. Again, it can be defined using its harmonizable representation:
\begin{equation}
Y^H_{\bf{x}}= \int_{\mathbb{R}^N} \frac{e^{i\langle \bf{x} , \boldsymbol{\xi}\rangle} -1}{ \|\boldsymbol{\xi} \|^{H+\frac{N}{2}}}  d\hat{\bf{W}}({\boldsymbol{\xi}}) 
\end{equation}
where $\langle \cdot, \cdot \rangle$ denotes the standard scalar product in $\mathbb{R}^N$. 

A second famous extension is given by the fractional Brownian sheet (fBs) studied in \cite{Kam96,ALP}. For a given vector ${\bf{H}}= (H_1,...H_N) \in (0,1)^N $, the fBs of Hurst index ${\bf{H}}$ is a real-valued centered Gaussian random field $S^{\bf{H}}$ with  the following harmonizable representation
\begin{equation}
S^{\bf{H}}_{\bf{x}} = \int_{\mathbb{R}^N} \prod_{m=1}^N \frac{ e^{i  x_m \xi_m }-1}{\vert \xi_m\vert^{H_m +\frac{1}{2}} } d\hat{\bf{W}}({\boldsymbol{\xi}}).
\end{equation}
Setting $H_m=\frac{1}{2}$ for each $m \in \{1, \dots, N\}$ yields the standard Brownian sheet. While LFBMs are isotropic, fBs exhibit a strong ``tensor-product'' structure even when $H_m=H$ for all $m \in \{1, \dots, N\}$ and no longer have stationary increments but instead possess rectangular stationary increments (see Definition \ref{def:rectangular}). Nevertheless, this field has been widely studied for its interesting mathematical aspects, including fractal dimensions \cite{MR2169474}, geometric features, local times, and many more.

{\bf Further developments.} Focusing on the class of Gaussian fields, the two previously mentioned extensions of the fractional Brownian motions can be seen as particular cases of more general models. Important properties have been introduced in these models, such as anisotropy with different properties along directions. %\cite{DH99,BE03}.

The Operator Scaling Gaussian Random Fields (OSGRF) introduced in \cite{BMS07} satisfy a self-similarity condition
$$
\forall a>0, \quad Z_{a^E {\bf{x}}} \stackrel{(d)}{=} a^H Z_{\bf{x}}
$$
for some $H>0$, where $E$ is a $N \times N$ matrix with eigenvalues having positive real parts, and where
$\displaystyle{
a^E = \sum_{k \ge 0} \frac{\ln (a)^k E^k}{k!}.}
$
These fields have been shown to exhibit anisotropic regularity properties \cite{BL09}, which offer strategies for numerical estimations of the parameters of the model \cite{RCVJA13}.

{\bf Goals, contribution and outline. } The strong effect of the tensor-product in Brownian sheets make them insufficiently realistic  for modeling many  textures. However, in urban data or medical images, some textures may exhibit  a reticulated structure. The contribution of the paper is to provide new Gaussian textures with a controlled tensor-product effect. This effect emerges as a parameter $\alpha$ goes from $1$ to $0$, yielding the fractional Brownian sheet for $\alpha=0$ and a field closer to an LFBM in terms of regularity for $\alpha=1$. 
Note that the regularity of the fields has been studied in detail in \cite{ELV24}, where associated ``weighted tensorized'' function spaces have also been defined. This notion of weighted tensorized regularity naturally arises in partial differential equations such as the electronic Schr\"odinger equation \cite{Yse1}.
In this paper, we focus on statistical properties of self-similarity, stationarity and variance of increments. We provide an extension to the anisotropic cases and simulations of these fields.

\section{Definition of the fields}

Let $\alpha \in [0,1]$ and $H \in (0,1)$, we set
\[ H_\alpha^+ := (1+\alpha)H \text{ and } H_\alpha^- := (1-\alpha)H\]
and we define a weighted tensorized fractional Brownian field (WTFBF) as the Gaussian field $\{X^{\alpha,H}_{(x_1,x_2)}\}_{(x_1,x_2) \in \mathbb{R}^2}$ given by 
\begin{equation}\label{eqn:defchamp}
X^{\alpha,H}_{(x_1,x_2)} :=  \int_{\mathbb{R}^2}  \frac{(e^{i x_1 \xi_1}-1) (e^{i x_2 \xi_2}-1)}{\phi_{\alpha,H}(\xi_1,\xi_2)}  d\hat{\bf{W}}({\boldsymbol{\xi}}) 
\end{equation}
where the function
\[\phi_{\alpha,H}(\xi_1,\xi_2)=\min(\vert \xi_1 \vert, \vert \xi_2 \vert)^{H_\alpha^- +\frac{1}{2}}\max (\vert \xi_1 \vert, \vert \xi_2 \vert)^{H_\alpha^++\frac{1}{2} }\]
denotes the square root of the inverse of the spectral density of the field. In the sequel, we also use the notation
\[ \mathcal{K}^{\alpha,H}_{(x_1,x_2)}(\xi_1,\xi_2) := \frac{(e^{i x_1 \xi_1}-1) (e^{i x_2 \xi_2}-1)}{\phi_{\alpha,H}(\xi_1,\xi_2)} \]
for the kernel in the stochastic integral \eqref{eqn:defchamp}. Note that the field \eqref{eqn:defchamp} is well-defined since this last kernel belongs to $L^2(\mathbb{R}^2)$. Note furthermore that the Fourier transform of $ \mathcal{K}^{\alpha,H}_{(x_1,x_2)}$ is real. Indeed, one has 
\begin{align*}
   &  \Im \left( e^{-i(t_1\xi_1 + t_2 \xi_2 )} (e^{i x_1 \xi_1}-1) (e^{i x_2 \xi_2}-1)\right)
   \\
    =&  \sin ((x_1-t_1)\xi_1 + (x_2-t_2) \xi_2) - \sin(-t_1 \xi_1 + (x_2-t_2) \xi_2) \\
   & - \sin ((x_1-t_1)\xi_1-t_2 \xi_2) + \sin(-t_1\xi_1 - t_2\xi_2)
\end{align*}
which is an odd function in $(\xi_1,\xi_2)$. It follows that 
$$\int_{\mathbb{R}^2}  \frac{ \Im \left( e^{-i(t_1\xi_1 + t_2 \xi_2 )}(e^{i x_1 \xi_1}-1) (e^{i x_2 \xi_2}-1)\right)}{\phi_{\alpha,H}(\xi_1,\xi_2)}  d\boldsymbol{\xi} = 0. 
$$
It implies that $\widehat{ \mathcal{K}^{\alpha,H}_{(x_1,x_2)}}$ is real, hence so  is  the field $X^{\alpha,H}_{(x_1,x_2)}$.

\section{Basic properties}

\begin{Prop}\label{prop:selfsimilar}
For all $\alpha \in [0,1]$ and $H \in (0,1)$, the process $X^{\alpha,H}$ is self-similar:  for all $a>0$, $\{X^{\alpha,H}_{(a x_1, a x_2)}\}_{(x_1,x_2)\in \mathbb{R}^2}\stackrel{(d)}{=}\{a^{2H} X^{\alpha,H}_{(x_1, x_2)}\}_{(x_1,x_2)\in \mathbb{R}^2}$.
\end{Prop}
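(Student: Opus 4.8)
The plan is to exploit the harmonizable representation \eqref{eqn:defchamp} together with a single deterministic change of variables in the frequency domain, transferred to the field via the scaling property of the Fourier white noise measure $\hat{\bf{W}}$. Since $X^{\alpha,H}$ is a centered Gaussian field, it suffices to establish the claimed equality of finite-dimensional distributions, which for jointly Gaussian families amounts to matching covariance structures; the substitution below does this simultaneously for all indices $(x_1,x_2)$.

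First I would record the homogeneity of the normalizing function $\phi_{\alpha,H}$. Because $\min$ and $\max$ are positively homogeneous of degree one, for every $a>0$ one has $\min(|a\xi_1|,|a\xi_2|)=a\min(|\xi_1|,|\xi_2|)$ and likewise for the maximum, so that
\[
\phi_{\alpha,H}(a\xi_1,a\xi_2)=a^{(H_\alpha^-+\frac12)+(H_\alpha^++\frac12)}\phi_{\alpha,H}(\xi_1,\xi_2)=a^{2H+1}\phi_{\alpha,H}(\xi_1,\xi_2),
\]
the key cancellation being $H_\alpha^-+H_\alpha^+=(1-\alpha)H+(1+\alpha)H=2H$, independent of $\alpha$. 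This is exactly why the self-similarity exponent will come out equal to $2H$ for every value of $\alpha$.

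Next I would substitute $\eta=a\xi$ in the integral defining $X^{\alpha,H}_{(ax_1,ax_2)}$. Since $e^{i(ax_j)\xi_j}=e^{ix_j(a\xi_j)}$, the numerator of the kernel becomes $(e^{ix_1\eta_1}-1)(e^{ix_2\eta_2}-1)$, while the homogeneity above gives $\phi_{\alpha,H}(\xi)=\phi_{\alpha,H}(\eta/a)=a^{-(2H+1)}\phi_{\alpha,H}(\eta)$. Collecting the factors yields $\mathcal{K}^{\alpha,H}_{(ax_1,ax_2)}(\xi)=a^{2H+1}\,\mathcal{K}^{\alpha,H}_{(x_1,x_2)}(a\xi)$, hence
\[
X^{\alpha,H}_{(ax_1,ax_2)}=a^{2H+1}\int_{\mathbb{R}^2}\mathcal{K}^{\alpha,H}_{(x_1,x_2)}(a\xi)\,d\hat{\bf{W}}(\xi).
\]
Then I would invoke the scaling property of the complex Gaussian measure: since $\hat{\bf{W}}$ has Lebesgue control measure on $\mathbb{R}^2$, the family $\{\int f(a\xi)\,d\hat{\bf{W}}(\xi)\}_f$ has the same law as $\{a^{-1}\int f(\eta)\,d\hat{\bf{W}}(\eta)\}_f$, the exponent being $-N/2=-1$ in dimension $N=2$. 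Applying this with $f=\mathcal{K}^{\alpha,H}_{(x_1,x_2)}$ simultaneously across all $(x_1,x_2)$ produces the factor $a^{-1}$, and combining with $a^{2H+1}$ gives $a^{2H}$, the desired relation.

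The main point to handle with care is that the asserted equality must hold jointly in $(x_1,x_2)$, not merely pointwise. This is guaranteed because the single linear change of variables $\eta=a\xi$ and the accompanying isometric scaling of $\hat{\bf{W}}$ act identically on every kernel $\mathcal{K}^{\alpha,H}_{(x_1,x_2)}$, so the whole Gaussian family is transformed coherently; equivalently, one may verify directly that the covariances $\mathbb{E}[X^{\alpha,H}_{(ax_1,ax_2)}X^{\alpha,H}_{(ay_1,ay_2)}]$ and $a^{4H}\,\mathbb{E}[X^{\alpha,H}_{(x_1,x_2)}X^{\alpha,H}_{(y_1,y_2)}]$ coincide under the same substitution. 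The only quantities to keep track of are then the homogeneity degree $2H+1$ of $\phi_{\alpha,H}$ and the dimensional exponent $N/2=1$ of the measure scaling, whose combination yields precisely $2H$.
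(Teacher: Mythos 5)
Your proof is correct and follows essentially the same route as the paper: the harmonizable representation, the change of variables $\boldsymbol{\eta}=a\boldsymbol{\xi}$ in the stochastic integral (contributing the factor $a^{-N/2}=a^{-1}$), and the homogeneity $\phi_{\alpha,H}(a\boldsymbol{\xi})=a^{2H+1}\phi_{\alpha,H}(\boldsymbol{\xi})$ coming from $H_\alpha^-+H_\alpha^+=2H$. Your explicit remarks on why the identity holds jointly in $(x_1,x_2)$ merely spell out what the paper leaves implicit in its use of $\stackrel{(d)}{=}$ for the whole family.
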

\begin{proof}
For all $(x_1,x_2) \in \mathbb{R}^2$ and $a>0$, we have
\begin{align*}
 X^{\alpha,H}_{(a x_1, a x_2)} 
&=  \int_{\mathbb{R}^2}   \frac{(e^{i a x_1 \xi_1}-1) (e^{i a x_2 \xi_2}-1)}{\phi_{\alpha,H}(\xi_1,\xi_2)}  d\hat{\bf{W}}({\boldsymbol{\xi}}) \\
&\stackrel{(d)}{=} \int_{\mathbb{R}^2}   \frac{(e^{i x_1 \eta_1}-1) (e^{i  x_2 \eta_2}-1)}{\phi_{\alpha,H}(\frac{\eta_1}{a},\frac{\eta_2}{a})} a^{-1} d\hat{\bf{W}}({\boldsymbol{\eta}})  \\
&=a^{2H } \int_{\mathbb{R}^2}   \frac{(e^{i x_1 \eta_1}-1) (e^{i  x_2 \eta_2}-1)}{\phi_{\alpha,H}(\eta_1,\eta_2)}  d\hat{\bf{W}}({\boldsymbol{\eta}}) \\
&=a^{2H } X^{\alpha,H}_{( x_1,  x_2)},
\end{align*}
where we used the change of variables $(\eta_1,\eta_2)=(a\xi_1,a\xi_2)$ in the stochastic integral.
\end{proof}

Classically, the stationarity of increments is a too strong property for stochastic fields, and it is preferable to use the property of stationarity for rectangular increments \cite{ALP,makomishu}.
\begin{Def}\label{def:rectangular}
If $\{X_{(x_1,x_2)}\}_{(x_1,x_2) \in \mathbb{R}^2}$ is a field and  if $(x_1,x_2),(y_1,y_2) \in \mathbb{R}^2$, we set
\begin{align*}
    &\Delta  X_{(x_1,x_2);(y_1,y_2)} \\
    &:= X_{(x_1+y_1,x_2+y_2)}-X_{(y_1,x_2+y_2)}-X_{(x_1+y_1,y_2)}+X_{(y_1,y_2)}.
\end{align*}
We say that $\{X_{(x_1,x_2)}\}_{(x_1,x_2) \in \mathbb{R}^2}$ has stationary rectangular increments if, for any $(y_1,y_2) \in \mathbb{R}^2$, we have
\begin{align*}
\{\Delta X_{(x_1,x_2);(y_1,y_2)} \}_{(x_1,x_2) \in \mathbb{R}^2}  \stackrel{(d)}{=}\{X_{(x_1,x_2)}\}_{(x_1,x_2) \in \mathbb{R}^2}.
\end{align*}
\end{Def}

\begin{Prop}\label{prop:stat}
    For all {$\alpha \in [0,1]$ and $H \in (0,1)$}, the field $\{X^{\alpha,H}_{(x_1,x_2)}\}_{(x_1,x_2) \in \mathbb{R}^2}$ has stationary rectangular increments.
\end{Prop}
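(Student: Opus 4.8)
The plan is to show that, for each fixed shift $(y_1,y_2)$, the rectangular increment field $\{\Delta X^{\alpha,H}_{(x_1,x_2);(y_1,y_2)}\}_{(x_1,x_2)}$ admits a harmonizable representation whose kernel coincides with that of $X^{\alpha,H}$ up to a unimodular phase factor depending only on $\boldsymbol{\xi}$. Since such a factor is the same for all space points and disappears in any covariance computation, the two centered Gaussian fields will share the same covariance and hence the same finite-dimensional distributions.

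First I would compute the kernel of the increment directly from Definition \ref{def:rectangular} and the linearity of the stochastic integral: $\Delta X^{\alpha,H}_{(x_1,x_2);(y_1,y_2)}$ is the integral against $d\hat{\bf{W}}$ of the sum of the four kernels $\mathcal{K}^{\alpha,H}$ evaluated at $(x_1+y_1,x_2+y_2)$, $(y_1,x_2+y_2)$, $(x_1+y_1,y_2)$ and $(y_1,y_2)$ with respective signs $+,-,-,+$. Pulling out the common denominator $\phi_{\alpha,H}(\xi_1,\xi_2)$, the numerator equals $(a-b)(c-d)$, where $a=e^{i(x_1+y_1)\xi_1}-1$, $b=e^{iy_1\xi_1}-1$, $c=e^{i(x_2+y_2)\xi_2}-1$ and $d=e^{iy_2\xi_2}-1$, by the elementary identity $ac-bc-ad+bd=(a-b)(c-d)$. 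Since $a-b=e^{iy_1\xi_1}(e^{ix_1\xi_1}-1)$ and $c-d=e^{iy_2\xi_2}(e^{ix_2\xi_2}-1)$, this yields
\[ \text{kernel of } \Delta X^{\alpha,H}_{(x_1,x_2);(y_1,y_2)} = e^{i(y_1\xi_1+y_2\xi_2)}\,\mathcal{K}^{\alpha,H}_{(x_1,x_2)}(\xi_1,\xi_2). \]
This factorization is the only genuine computation in the argument.

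It then remains to exploit that $X^{\alpha,H}$, and therefore the increment field, is centered Gaussian, so that equality in finite-dimensional distributions is equivalent to equality of covariance functions. Writing the covariance of two real harmonizable integrals $\int g\,d\hat{\bf{W}}$ and $\int h\,d\hat{\bf{W}}$ as the Hermitian $L^2$ pairing $\int g(\boldsymbol{\xi})\overline{h(\boldsymbol{\xi})}\,d\boldsymbol{\xi}$, I would compute the covariance of the increment field between two points $(x_1,x_2)$ and $(x_1',x_2')$: the phase factor $e^{i(y_1\xi_1+y_2\xi_2)}$ coming from the first point and its conjugate coming from the second multiply to $1$, because $\lvert e^{i(y_1\xi_1+y_2\xi_2)}\rvert=1$. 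What survives is exactly $\int \mathcal{K}^{\alpha,H}_{(x_1,x_2)}\overline{\mathcal{K}^{\alpha,H}_{(x_1',x_2')}}\,d\boldsymbol{\xi}$, i.e.\ the covariance of $X^{\alpha,H}$ itself. Hence the covariance of the increment field does not depend on $(y_1,y_2)$ and equals that of $X^{\alpha,H}$, which is precisely the claimed equality in law.

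I expect no serious obstacle: the heart of the proof is the algebraic factorization that produces the common phase factor, and the subsequent cancellation in the covariance is immediate precisely because that factor is unimodular and shared by every space point. The only point requiring a little care is to invoke the correct covariance formula for the real harmonizable integral, using that the Fourier transform of the kernel is real, as already observed after \eqref{eqn:defchamp}.
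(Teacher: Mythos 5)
Your proposal is correct and takes essentially the same route as the paper: the heart of both arguments is the identical factorization $\Delta X^{\alpha,H}_{(x_1,x_2);(y_1,y_2)} = \int_{\mathbb{R}^2} e^{i(y_1\xi_1+y_2\xi_2)}\,\mathcal{K}^{\alpha,H}_{(x_1,x_2)}(\xi_1,\xi_2)\, d\hat{\bf{W}}(\boldsymbol{\xi})$, after which the unimodular phase must be shown to be invisible in the law. The only (cosmetic) difference is the finishing step: the paper cancels the phase inside the modulus of the joint characteristic function via \cite[Corollary 6.3.2]{ST94}, whereas you cancel it in the Hermitian covariance pairing and invoke that centered Gaussian fields are determined by their covariance --- the same cancellation in different packaging.
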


\begin{proof}
    First, we remark that for any $(x_1,x_2),(y_1,y_2) \in \mathbb{R}^2$, we get from \eqref{eqn:defchamp}
    \begin{align*}
\Delta  X_{(x_1,x_2);(y_1,y_2)}^{\alpha,H} \! =  \!\int_{\mathbb{R}^2} e^{i(y_1 \xi_1 + y_2 \xi_2)}\mathcal{K}^{\alpha,H}_{(x_1,x_2)}(\xi_1,\xi_2)  d\hat{\bf{W}}({\boldsymbol{\xi}}) .
    \end{align*}
Thus, recalling \cite[Corollary 6.3.2]{ST94}, we have 
\begin{align*}
   & \mathbb{E} \left( \exp\left(i \sum_{j=1}^n t^{(j)} \Delta  X_{(x_1^{(j)},x_2^{(j)});(y_1,y_2)}^{\alpha,H} \right)\right) \\
    &\! =\exp \!\! \left( \!\!-c_0\!\! \int_{\mathbb{R}^2} \! \left\vert \sum_{j=1}^n t^{(j)}  e^{i(y_1 \xi_1 + y_2 \xi_2)}\mathcal{K}^{\alpha,H}_{(x_1^{(j)},x_2^{(j)})}(\xi_1,\xi_2)\right\vert^2 \! \!\!\!d{\boldsymbol{\xi}}  \! \right) \\
    &\! = \mathbb{E} \left( \exp\left(i \sum_{j=1}^n t^{(j)}  X^{\alpha,H}_{(x_1^{(j)},x_2^{(j)})} \right) \right),
\end{align*}
for any $(x_1^{(1)},x_2^{(1)}),\dots,(x_1^{(n)},x_2^{(n)}),(y_1,y_2) \in \mathbb{R}^2$ and any $t^{(1)},\dots,t^{(n)} \in \mathbb{R}$, with 
\[ c_0 := \frac{1}{2\pi} \int_0^\pi \cos(\theta)^2 \, d\theta.\]
The conclusion follows directly.
\end{proof}

\setlength{\tabcolsep}{1pt}
\begin{figure*}
    \centering
    \begin{tabular}{cccccc}
       \includegraphics[width=0.162\textwidth]{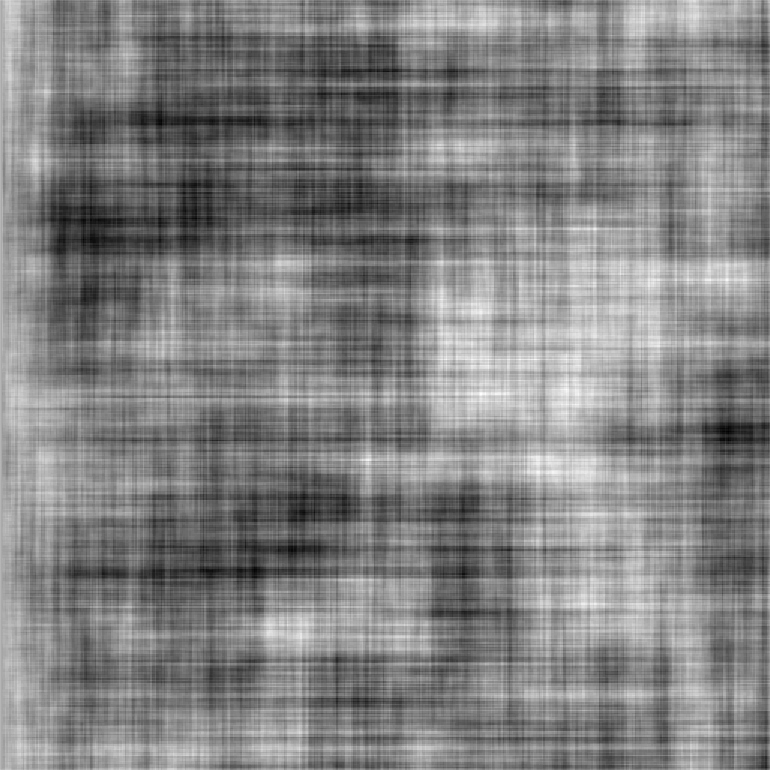}  & \includegraphics[width=0.162\textwidth]{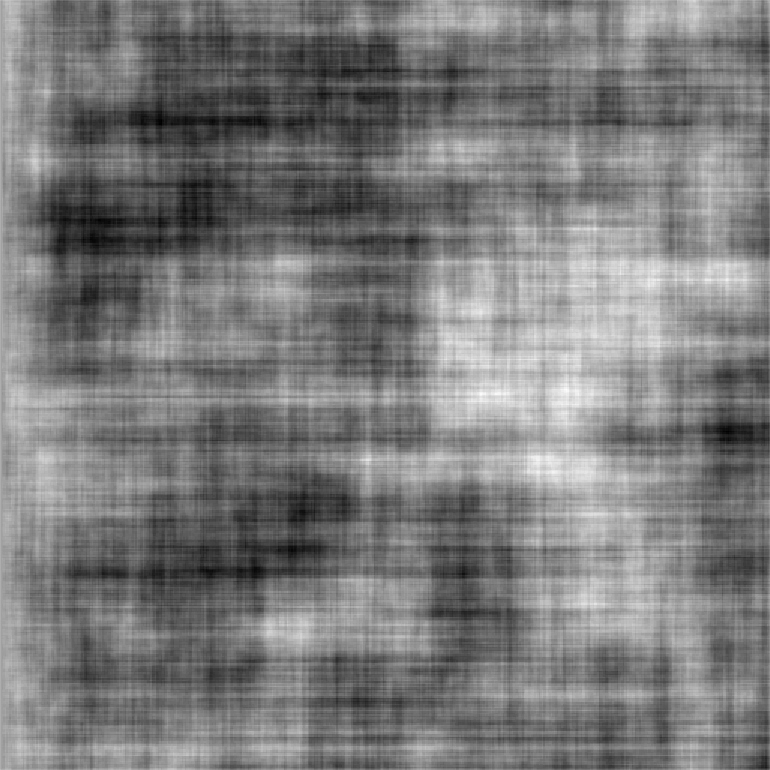} & \includegraphics[width=0.162\textwidth]{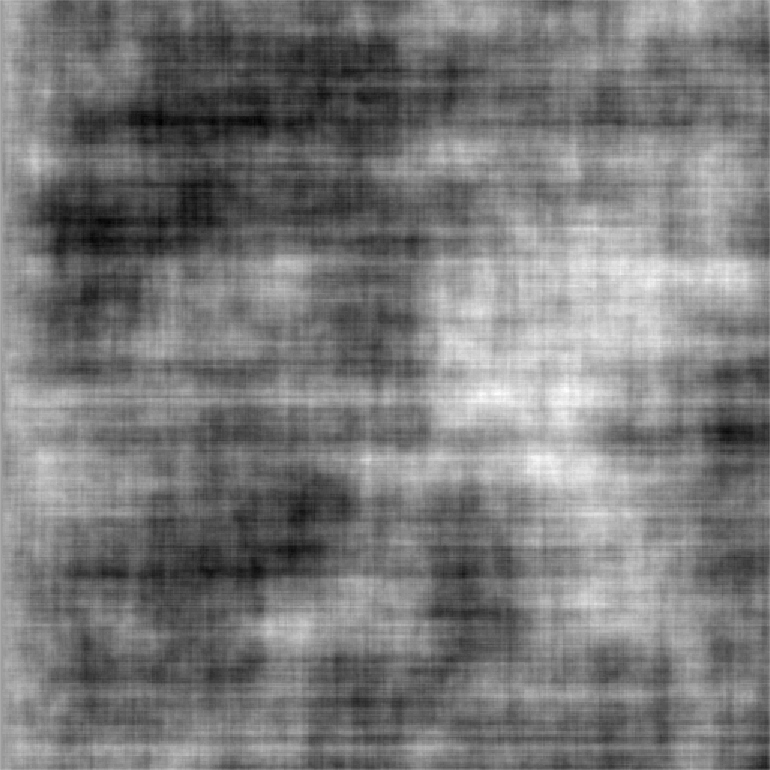} & \includegraphics[width=0.162\textwidth]{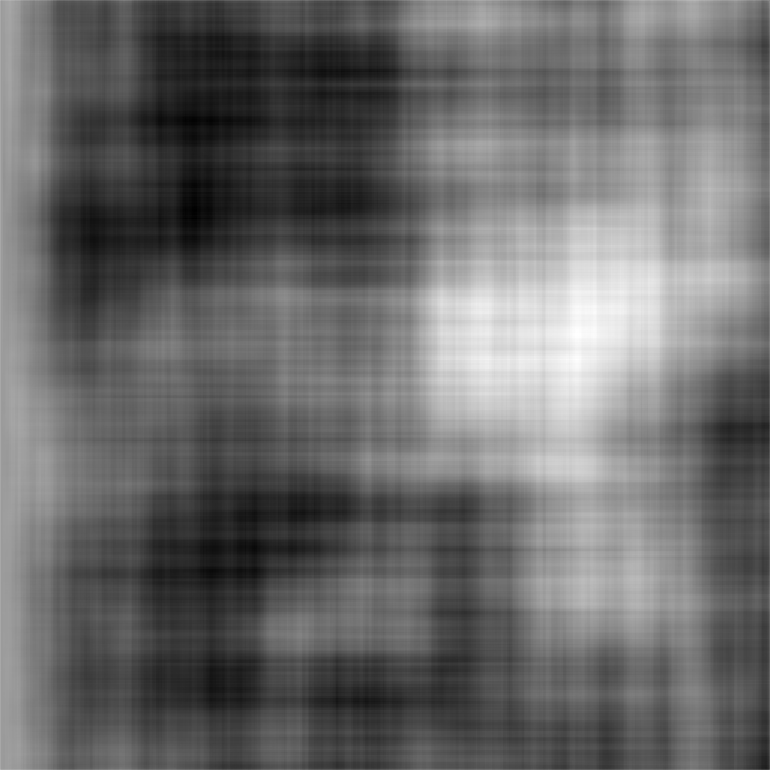} & \includegraphics[width=0.162\textwidth]{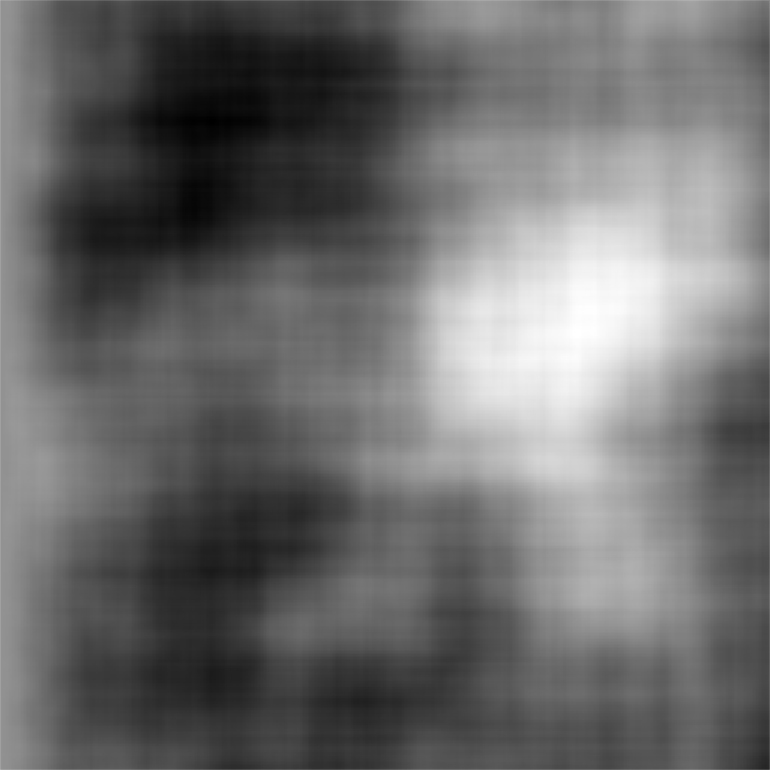} & \includegraphics[width=0.162\textwidth]{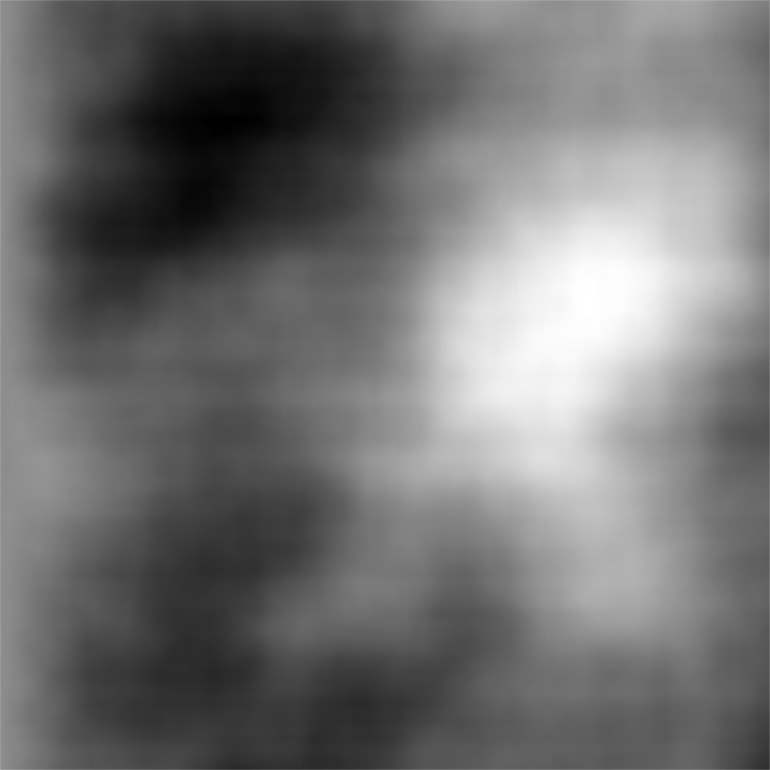} \\
         %a. $H= 0.3$, $\alpha = 0$ & b. $H= 0.3$, $\alpha = 0.5$ & c. $H= 0.3$, $\alpha = 1$ & d. $H= 0.7$, $\alpha = 0$ & e. $H= 0.7$, $\alpha = 0.5$ & f. $H= 0.7$, $\alpha = 1$
         (a) $\alpha = 0$ & (b) $\alpha = 0.5$ & (c) $\alpha = 1$ & (d) $\alpha = 0$ & (e) $\alpha = 0.5$ & (f) $\alpha = 1$
    \end{tabular}    
    \caption{Weighted tensorized fractional Brownian fields simulated using a spectral representation approximation method, with parameters (a-c) $H~=~0.3$ or (d-f) $H~=~0.7$ and (a,d) $\alpha~=~0$, (b,e) $\alpha~=~0.5$ or (c,f) $\alpha~=~1$.}
    \label{fig:WTFBT_H03_H07_simu}
\end{figure*}

\section{Variance of rectangular increments}

\begin{Prop}\label{prop:variancerect}
For all {$\alpha \in [0,1]$ and $H \in (0,1)$}, there is a constant $c_1>0$ such that the rectangular increments of $\{X^{\alpha,H}_{(x_1,x_2)}\}_{(x_1,x_2) \in \mathbb{R}^2}$ satisfy 
\begin{align*}
   & \mathbb{E}(|\Delta  X^{\alpha,H}_{(h_1,h_2);(x_1,x_2)}|^2) \\
 & \,\,  \leq   c_1 \left(\max\{|h_1|,|h_2| \}^{1-\alpha}\min\{|h_1|,|h_2| \}^{1+\alpha}\right)^{2H}
\end{align*}
for all $(x_1,x_2),(h_1,h_2) \in \mathbb{R}^2$. 
\end{Prop}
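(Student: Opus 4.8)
The plan is to reduce the statement to a bound on the variance of the field and then to a one–dimensional computation. By Proposition~\ref{prop:stat}, for every shift $(x_1,x_2)$ one has $\Delta X^{\alpha,H}_{(h_1,h_2);(x_1,x_2)}\stackrel{(d)}{=}X^{\alpha,H}_{(h_1,h_2)}$, so it suffices to bound $\mathbb{E}(|X^{\alpha,H}_{(h_1,h_2)}|^2)$. By the isometry underlying \eqref{eqn:defchamp} (already used in the proof of Proposition~\ref{prop:stat}, with the constant $c_0$), and since $|e^{ih\xi}-1|^2=4\sin^2(h\xi/2)$, this variance equals, up to a fixed positive constant,
\[ I(h_1,h_2):=\int_{\mathbb{R}^2}\frac{\sin^2(h_1\xi_1/2)\,\sin^2(h_2\xi_2/2)}{\min(|\xi_1|,|\xi_2|)^{2H_\alpha^-+1}\,\max(|\xi_1|,|\xi_2|)^{2H_\alpha^++1}}\,d\xi_1\,d\xi_2. \]
As $\phi_{\alpha,H}$ is symmetric in its two arguments and even in each of them, the integrand is unchanged under $(\xi_i,h_i)\mapsto(-\xi_i,-h_i)$ and under the simultaneous swap of the indices $1$ and $2$; I may therefore assume $h_1\ge h_2>0$ and integrate over the first quadrant.

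Next I would exploit self-similarity. The substitution $\xi_i\mapsto\xi_i/t$ gives $I(h_1,h_2)=t^{4H}I(h_1/t,h_2/t)$, the exponent $4H=2H_\alpha^-+2H_\alpha^+$ matching Proposition~\ref{prop:selfsimilar}. Choosing $t=h_1$ and setting $r:=h_2/h_1\in(0,1]$ reduces the claim to $I(1,r)\le C\,r^{2H_\alpha^+}$, since the announced bound equals $h_1^{2H_\alpha^-}h_2^{2H_\alpha^+}=h_1^{4H}r^{2H_\alpha^+}$. Normalizing the second frequency by $\xi_2=\zeta/r$ turns $\sin^2(r\xi_2/2)$ into $\sin^2(\zeta/2)$ and puts both variables on scale $1$, at the cost of the coupling $\min(\xi_1,\zeta/r)$, $\max(\xi_1,\zeta/r)$. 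For $r$ small one typically has $\xi_1<\zeta/r$, so $\min=\xi_1$ and $\max=\zeta/r$, and a direct computation extracts a factor $r^{2H_\alpha^+}$ times the product of the two one-dimensional integrals $\int_0^\infty \sin^2(\xi_1/2)\,\xi_1^{-2H_\alpha^--1}\,d\xi_1$ and $\int_0^\infty\sin^2(\zeta/2)\,\zeta^{-2H_\alpha^+-1}\,d\zeta$. The complementary region $\xi_1>\zeta/r$ is handled symmetrically and should contribute a lower-order power of $r$.

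The main obstacle is the convergence of these one-dimensional integrals, which is exactly where the tensor-coupling matters. The first converges because $0<H_\alpha^-=(1-\alpha)H<1$ always. The second, $\int_0^\infty\sin^2(\zeta/2)\,\zeta^{-2H_\alpha^+-1}\,d\zeta$, converges at the origin only when $H_\alpha^+=(1+\alpha)H<1$; in that regime the factorization above is legitimate and yields the stated bound directly. When $(1+\alpha)H\ge1$—a range that includes the parameters $H=0.7$, $\alpha=1$ of Fig.~\ref{fig:WTFBT_H03_H07_simu}—this integral diverges at low frequency, and the divergence is cut off only by the coupling constraint $\zeta\gtrsim r\xi_1$ coming from $\min/\max$. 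Turning that cutoff into the correct power of $r$, rather than simply bounding $\sin^2\le1$, is what I expect to be the delicate heart of the argument: one must integrate $\int_{\gtrsim r\xi_1}\sin^2(\zeta/2)\,\zeta^{-2H_\alpha^+-1}\,d\zeta$ against the outer variable and control how its lower limit feeds back through $\xi_1$. This low-frequency regime is precisely where I would concentrate the technical effort and check most carefully that the announced exponent $2H_\alpha^+$ is indeed recovered.
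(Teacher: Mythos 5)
Your reduction --- isometry, rescaling so that $h_1=1$, $h_2=r\in(0,1]$, and splitting according to which frequency realizes the $\min$/$\max$ --- is in substance the same route the paper takes, and in the regime $\alpha<1$, $H_\alpha^+=(1+\alpha)H<1$ your sketch does yield the stated bound. But the step you defer to ``the delicate heart of the argument'' is not a technicality you failed to push through: it is where the statement itself fails, so no treatment of the cutoff $\zeta\gtrsim r\xi_1$ can recover the exponent $2H_\alpha^+$ there. Restrict the (nonnegative) spectral integral to a unit box: for $\xi_1,\xi_2\in[1,2]$ one has $\phi_{\alpha,H}(\xi_1,\xi_2)\asymp1$ and $\sin^2(r\xi_2/2)\asymp r^2$, whence
\begin{equation*}
I(1,r)\;\ge\;c\,r^{2},\qquad r\in(0,1],
\end{equation*}
so $I(1,r)\le C\,r^{2H_\alpha^+}$ is impossible for small $r$ once $2H_\alpha^+>2$. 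Undoing the scaling, $\mathbb{E}\big(|\Delta X^{\alpha,H}_{(h_1,h_2);(x_1,x_2)}|^2\big)\ge c\,\max\{|h_1|,|h_2|\}^{4H-2}\min\{|h_1|,|h_2|\}^{2}$, which exceeds the claimed bound by the factor $(\max/\min)^{2H_\alpha^+-2}$. A structural cross-check confirms this: the Proposition, combined with the (correctly proved) stationarity of horizontal increments and an $n$-step triangle-inequality chaining, would force $\mathbb{E}(|X^{\alpha,H}_{(L,x_2)}|^2)=0$ whenever $H_\alpha^+>1$, contradicting the nondegeneracy of the field. So in the range $(1+\alpha)H>1$ (which includes $\alpha=1$, $H=0.7$, exactly the parameters you singled out) the correct order is $\max^{4H-2}\min^{2}$, not the announced one.

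Two further remarks. First, the paper's own proof does not survive at the same spot: its pointwise lower bound on $\phi_{\alpha,H}(\eta_1/h_1,\eta_2/h_2)^2$ is valid but too lossy, and the resulting ``constant'' $c_1$ in \eqref{eq:Constante} is in fact $+\infty$ for every $(\alpha,H)$ --- on the region $\{|\eta_1|>|\eta_2|\}$ its denominator is $|\eta_2|^{4H+2}$, with no decay in $\eta_1$, and $\int^{\infty}\sin^2(\eta_1/2)\,d\eta_1$ over a half-line diverges. Your blind attempt therefore localized a genuine defect that the published argument glosses over; the fix is to keep the coupled $\min$/$\max$ and split into regions, exactly as you propose, which works precisely when $\alpha<1$ and $(1+\alpha)H<1$. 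Second, your side claim that the first one-dimensional integral ``converges because $0<H_\alpha^-<1$ always'' is wrong at $\alpha=1$: there $H_\alpha^-=0$ and $\int_0^\infty\sin^2(\xi_1/2)\,\xi_1^{-1}\,d\xi_1$ diverges logarithmically at high frequency. This is not cosmetic: tracking the cutoff shows that for $\alpha=1$ the true order is $r^{4H}\ln(1/r)$, so the Proposition fails by a logarithm at $\alpha=1$ even when $H<1/2$ (and similarly, by a logarithm, on the boundary $(1+\alpha)H=1$). In summary, your approach proves the statement exactly on the range $\alpha\in[0,1)$, $(1+\alpha)H<1$; outside that range the statement, and the paper's proof of it, are incorrect.
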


\begin{proof}

The isometry property of the stochastic integral  gives  
\begin{align*} 
&\mathbb{E}\big(|\Delta  X^{\alpha,H}_{(h_1,h_2);(x_1,x_2)}|^2\big)\\
& = \int_{\mathbb{R}^2} \frac{|e^{i(x_1+h_1)\xi_1} - e^{ix_1\xi_1}|^2 |e^{i(x_2+h_2)\xi_2} - e^{ix_2\xi_2}|^2}{(\phi_{\alpha,H}(\xi_1,\xi_2))^2}  d{\boldsymbol{\xi}}\\
& = \frac{1}{|h_1| \, |h_2|}\int_{\mathbb{R}^2}  \frac{|e^{i\eta_1} -1|^2 |e^{i\eta_2} - 1|^2}{(\phi_{\alpha,H}(\frac{\eta_1}{h_1},\frac{\eta_2}{h_2}))^2}  d{\boldsymbol{\eta}}
\end{align*}
using the change of variables $(\eta_1,\eta_2) = (h_1 \xi_1, h_2\xi_2)$. Notice now that
if $|h_1|\geq|h_2|$, one has
\begin{align*}
(\phi_{\alpha,H}(\tfrac{\eta_1}{h_1},\tfrac{\eta_2}{h_2}))^2
& \geq \frac{\min(\vert \eta_1 \vert, \vert \eta_2  \vert)^{2H_\alpha^- +1}  \vert \eta_2 \vert^{2H_\alpha^++1 }}{|h_1|^{(2H_\alpha^- +1)}|h_2|^{(2H_\alpha^+ +1)} }.
\end{align*}
It implies $\displaystyle{
\mathbb{E}\big(|\Delta  X^{\alpha,H}_{(h_1,h_2);(x_1,x_2)}|^2\big) \leq c_1  |h_1|^{2H_\alpha^- }|h_2|^{2H_\alpha^+ }  }
$
with
\begin{equation}\label{eq:Constante}
c_1 =   \int_{\mathbb{R}^2}   \frac{|e^{i\eta_1} -1|^2 |e^{i\eta_2} - 1|^2}{\min(\vert \eta_1 \vert, \vert \eta_2  \vert)^{2H_\alpha^- +1}  \vert \eta_2 \vert^{2H_\alpha^++1 }}d{\boldsymbol{\eta}}    
\end{equation}
if $|h_1|\geq|h_2|$. The same argument for $|h_1| < |h_2|$ leads to the conclusion. 
\end{proof}

Regarding the rectangular increments, a generalization of Kolmogorov's continuity theorem allows us to assert that there exists a modification of the field $\{X^{\alpha,H}_{(x_1,x_2)}\}_{(x_1,x_2) \in \mathbb{R}^2}$ which is nearly \emph{locally $(H^+_\alpha, H^-_\alpha)$-rectangular H\"older}. This means that for every bounded intervals $I,J$ of $\mathbb{R}$, every $x_1 \in I$, $x_2 \in J$ and every $\varepsilon>0$, there exists a positive finite random variable $C>0$ such that almost surely
\begin{align*}
&|\Delta  X^{\alpha,H}_{(h_1,h_2);(x_1,x_2)}|\\
& \leq C \left( \max\{|h_1|,|h_2| \}^{(1-\alpha)}\min\{|h_1|,|h_2| \}^{(1+ \alpha)} \right)^{H-\varepsilon}
\end{align*}
for all $h_1, h_2 \in \mathbb{R}$ such that $x_1+h_1 \in I$ and $x_2 + h_2 \in J$, see \cite{ELV24} for details.

\section{Anisotropic extension}

The model introduced in the previous sections can be extended to provide anisotropic textures by imposing an operator scaling property.

We consider $\beta_1, \beta_2 \in (0,2) $ such that $\beta_1+\beta_2 =2$, and set
\begin{align}\label{eqn:OpScalingExtension}
X^{\alpha,H,\beta_1,\beta_2}_{(x_1,x_2)} :=  \int_{\mathbb{R}^2}  \frac{(e^{i x_1 \xi_1}-1) (e^{i x_2 \xi_2}-1)}{\phi_{\alpha,H,\beta_1,\beta_2}(\xi_1,\xi_2)}  d\hat{\bf{W}}({\boldsymbol{\xi}}) 
\end{align}
where $\phi_{\alpha,H,\beta_1,\beta_2}(\xi_1,\xi_2)=\phi_{\alpha,H}\big(|\xi_1|^\frac{1}{\beta_1},|\xi_2|^\frac{1}{\beta_2}\big).$
If  $\max(\beta_1,\beta_2)-1<2H < 3\min(\beta_1,\beta_2)-1$, the corresponding field is well-defined and satisfies
\begin{equation}
X^{\alpha, H, \beta_1,\beta_2}_{a^D \bf{x} }\stackrel{(d)}{=} a^{2H} X^{\alpha, H, \beta_1,\beta_2}_{\bf x}
\end{equation}
with $D=\mbox{diag}(\beta_1,\beta_2)$ and $a^D {\bf{x}} = (a^{\beta_1} x_1, a^{\beta_2} x_2)$.  These fields then satisfy anisotropic properties of regularities that will be explored in a forthcoming work.

\begin{figure*}
    \centering
    \begin{tabular}{cccccc}
\includegraphics[width=0.162\textwidth]{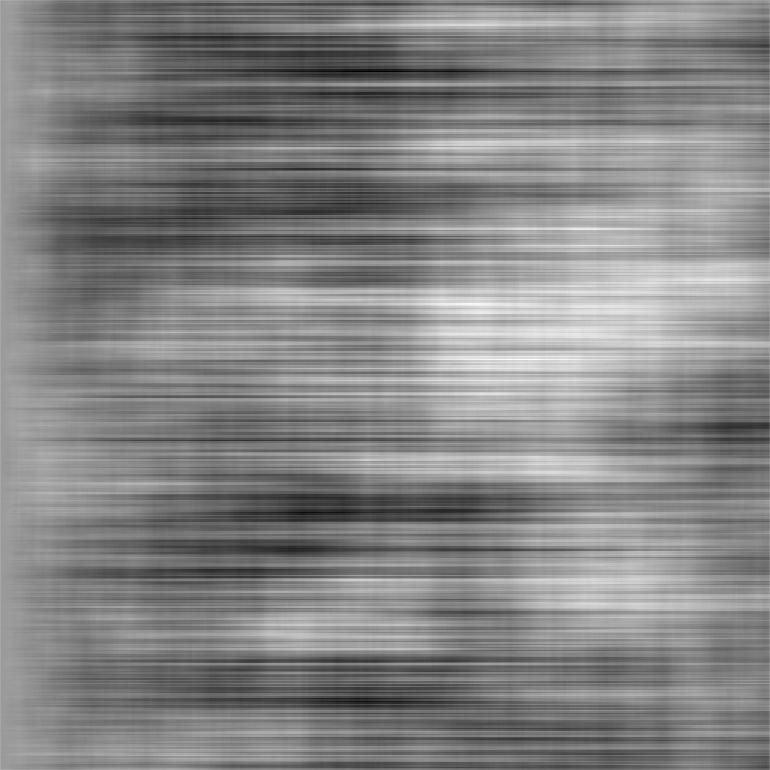} & \includegraphics[width=0.162\textwidth]{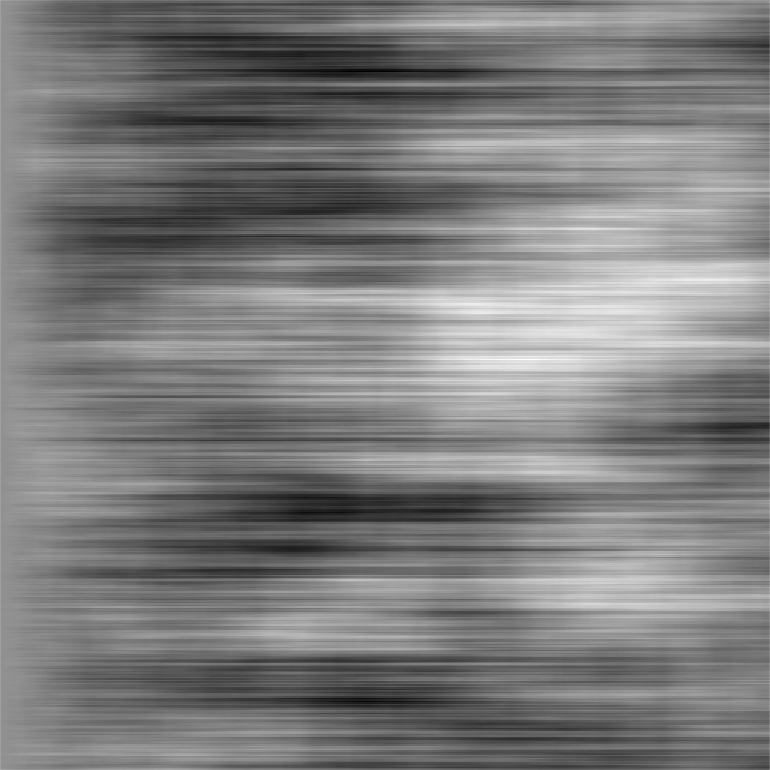} & \includegraphics[width=0.162\textwidth]{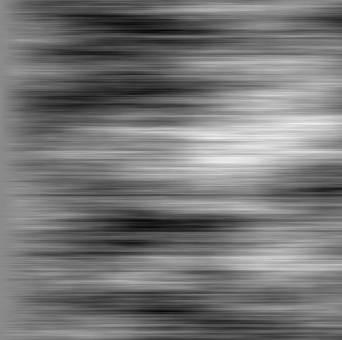} & \includegraphics[width=0.162\textwidth]{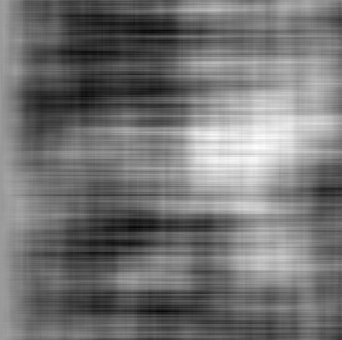} & \includegraphics[width=0.162\textwidth]{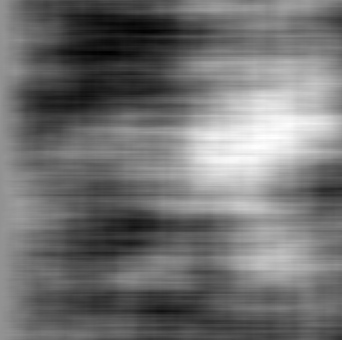}& \includegraphics[width=0.162\textwidth]{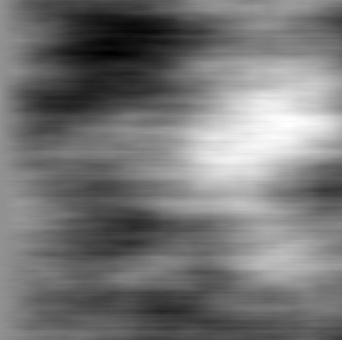} \\
         (a) $\alpha = 0$ & (b) $\alpha = 0.5$ & (c) $\alpha = 1$ & (d) $\alpha = 0$  & (e) $\alpha = 0.5$  & (f) $\alpha = 1$
    \end{tabular}    
    \caption{Anisotropic weighted tensorized fractional Brownian fields simulated using a spectral representation approximation method, with parameters (a-c) $H=0.4$, $\beta_1=0.7$, $\beta_2=1.3$ and $\alpha = 0$, $\alpha = 0.5$ or $\alpha = 1$, and (d-f) $H=0.6$, $\beta_1=0.85$, $\beta_2=1.15$ and $\alpha = 0$, $\alpha = 0.5$ or $\alpha = 1$. }
    \label{fig:WTFBT_aniso1_simu}
\end{figure*}

\section{Simulation}
Several strategies have been developed to simulate Gaussian random fields. Methods based on an explicit expression for the covariance of the field allow for exact simulations that preserve statistical properties such as stationarity (see \cite{BL20}). When the covariance is not explicitly known but is known along radial directions, the turning-bands method can be employed, as in \cite{BMR15} to simulate some anisotropic fields. Using the spectral density of the field, approximations of AFBF have been obtained in \cite{ABE09}. Given that we only have an integral expression of the covariance, we adopt this approach to generate a WTFBF. This spectral representation method involves discretizing the field in the Fourier domain \cite{SD96}. Although its main limitations include challenges in assessing the convergence of the approximation, as well as the potential for the inverse Fourier transform to disrupt the statistical properties of the field, this method is still widely used to generate stationary and non-stationary Gaussian random fields. In the case of stationary random fields, Shinozuka and Deodatis \cite{SD96} proved that the generated samples verify ergodic properties, in the sense that the spatial mean and autocorrelation function of any sample converge to the theoretical mean and autocorrelation function of the field, as the sample size increases. It is also fast and easy to perform as it involves fast Fourier transforms. Approximations based on wavelet methods could be used, but they are known to be quite slow in practice even if they provide the best approximation rate by a series in the case of FBF.

The results presented in Figures \ref{fig:WTFBT_H03_H07_simu} and \ref{fig:WTFBT_aniso1_simu} are generated using a spectral representation approximation on a discrete grid of size $(M+1) \times (M+1)$, with $M=512$. For a given WTFBF $\{ X_{(x_1,x_2)}^{\alpha, H} \}_{(x_1,x_2) \in \mathbb{R}^2}$, the strategy involves generating $W$, a collection of independent standard complex Gaussian variables of size $(2M \times 2M)$. These variables are then multiplied by a function $g$. Next, in both directions successively, a 1D Fourier transform is applied, followed by subtracting the value of the field at the origin. If  we set  $
g(x,y) = 
    (\phi_{\alpha, H}(x,y))^{-1} \textbf{1}_{\{x \neq 0, y \neq 0\}}
$ for $(x,y) \in \mathbb{R}^2$,  the generated field $x^{\alpha,H}$ is given, for all $k_1, k_2 \in \{0,\dots, M\}$, by
$${ x^{\alpha,H} \left( \tfrac{k_1}{M}, \tfrac{k_2}{M} \right) = \mathcal{R}\left( y_2 \left( \tfrac{k_1}{M}, \tfrac{k_2}{M} \right) -  y_2\left(0,\tfrac{k_2}{M}\right) \right)},$$
where for any $n_1 \in \{ - M+1, \dots, M\}$
$$\displaystyle{ y_1 \left( n_1, \tfrac{k_2}{M} \right) = \hspace{-3mm}\sum_{n_2=-M+1}^M \hspace{-3mm} W(n_1,n_2) g\left( \pi n_1, \pi n_2 \right) e^{- \frac{2i \pi n_2 k_2}{2M}}},$$
$$\displaystyle{y_2 \left( \tfrac{k_1}{M}, \tfrac{k_2}{M} \right) = \pi \hspace{-3mm} \sum_{n_1=-M+1}^M \hspace{-3mm} \left( y_1 \left( n_1, \tfrac{k_2}{M} \right) - y_1(n_1, 0) \right) e^{- \frac{2i \pi n_1 k_1}{2M}}}.$$

The same method is used to simulate the anisotropic extension. 
Figure \ref{fig:WTFBT_H03_H07_simu} presents synthesized  WTFBFs  with various parameters $H$ and $\alpha$. When $\alpha =0$, the procedure samples a fBs while, when $\alpha=1$, the generated texture tends to loose the reticulated aspect and to approach a fractional Brownian field.
Figure \ref{fig:WTFBT_aniso1_simu} shows  anisotropic WTFBFs.  These fields  $X^{\alpha,H,\beta_1,\beta_2}$ produce anisotropic textures, where the highest $\beta$ determines the dominant direction. The images (a-f) illustrate the effects of the parameters $\alpha$, $\beta$ and $H$ on the fields. 
We generated 100 textures of size $512 \times 512$, with parameters $H =0.3$ and $\alpha = 0.5$ to evaluate the statistical properties of the realizations, similarly as in \cite{VS2021}. The results are presented in Table \ref{table_simu}. Note that the fields given by the rectangular increments tend to have a mean and a skewness approaching the mean and skewness of the original field and the theoretical mean and skewness, supposed to be zero. Regarding the self-similarity property, the estimated mean and skewness of the rescaled generated textures are close to their theoretical target, zero. In both cases however, the variance seems to be biased. A matlab implementation to generate these textures and reproduce our results is available online (https://github.com/claunay/wtfbf).

\vspace{-0.5cm}
\begin{table}[htbp]
\caption{Estimated moments of WTFBFs generated by the spectral representation method}
\vspace{-3mm}
\begin{center}
\begin{tabular}{|c|c|c|c|}
\hline
& $X^{\alpha,H}$ & $\Delta X^{\alpha,H}$ &$\frac{1}{a^{2H}}X^{\alpha,H}$\\
\hline
Mean  & $-2 \times 10^{-4}$& $-1 \times 10^{-5}$& $2 \times 10^{-2}$\\
\hline
Variance  & 7.3 & 10.7 & 1.4\\
\hline
Skewness  & $6 \times 10^{-4}$ & $1 \times 10^{-6}$& -0.3\\
\hline
\end{tabular}
\end{center}
\label{table_simu}
\end{table}

\vspace{-1.1cm}
\section*{Conclusion}

In this paper, we introduced a new model of textures that relaxes the tensor-product structure of fractional Brownian sheets, in an attempt to construct a bridge between fBs and fractional Brownian fields. These textures can appear in medical or urban images and are associated with regularity properties involved partial differential equations. We have shown that these random fields are self-similar and have stationary rectangular increments, with bounded variance. Some simulations illustrate the behavior of these textures for various parameters.

\vspace{2mm}

{\bf{Acknowledgment}}
This work was conducted within  the ANR Mistic project (ANR-19-CE40-005).

\vspace{-4mm}

\bibliography{biblio}

\end{document}